\def\NZQ{\mathbb}               % the font for N,Z,Q,R,C
\def\ZZ{{\NZQ Z}}
\def\RR{{\NZQ R}}
\def\frk{\mathfrak}               % font for "Fraktur"
\def\Phi{{\frk N}}
\def\eb{{\bold e}}
\def\opn#1#2{\def#1{\operatorname{#2}}} % to make operators
\opn\chara{char} 
\opn\length{\ell} 
\opn\pd{pd} 
\opn\rk{rk}
\opn\projdim{proj\,dim} 
\opn\injdim{inj\,dim} 
\opn\rank{rank}
\opn\depth{depth} 
\opn\grade{grade} 
\opn\height{height}
\opn\embdim{emb\,dim} 
\opn\codim{codim}
\opn\Tr{Tr} 
\opn\bigrank{big\,rank}
\opn\superheight{superheight}
\opn\lcm{lcm}
\opn\trdeg{tr\,deg}%\emph{
\opn\reg{reg} 
\opn\lreg{lreg} 
\opn\ini{in} 
\opn\lpd{lpd}
\opn\size{size}
\opn\mult{mult}
\opn\dist{dist}
\opn\cone{cone}
\opn\lex{lex}
\opn\rev{rev}
\opn\div{div} \opn\Div{Div} \opn\cl{cl} \opn\Cl{Cl}
\opn\Spec{Spec} \opn\Supp{Supp} \opn\supp{supp} \opn\Sing{Sing}
\opn\Ass{Ass} \opn\Min{Min}
\opn\Ann{Ann} \opn\Rad{Rad} \opn\Soc{Soc}
\opn\Syz{Syz} \opn\Im{Im} \opn\Ker{Ker} \opn\Coker{Coker}
\opn\Am{Am} \opn\Hom{Hom} \opn\Tor{Tor} \opn\Ext{Ext}
\opn\End{End} \opn\Aut{Aut} \opn\id{id} \opn\ini{in}
\opn\nat{nat}
\opn\pff{pf}%   \pf exists already
\opn\Pf{Pf} \opn\GL{GL} \opn\SL{SL} \opn\mod{mod} \opn\ord{ord}
\opn\Gin{Gin}
\opn\Hilb{Hilb}\opn\adeg{adeg}\opn\std{std}\opn\ip{infpt}
\opn\Pol{Pol}
\opn\sat{sat}
\opn\Var{Var}
\opn\Gen{Gen}
\opn\aff{aff} \opn\con{conv} \opn\relint{relint} \opn\st{st}
\opn\lk{lk} \opn\cn{cn} \opn\core{core} \opn\vol{vol}
\opn\link{link} \opn\star{star}
\opn\gr{gr}
\def\Gc{{\mathcal G}}
\def\pot#1#2{#1[\kern-0.28ex[#2]\kern-0.28ex]}
\opn\dirlim{\underrightarrow{\lim}}
\opn\inivlim{\underleftarrow{\lim}}
\let\to=\rightarrow
\def\Implies{\ifmmode\Longrightarrow \else
        \unskip${}\Longrightarrow{}$\ignorespaces\fi}
\def\implies{\ifmmode\Rightarrow \else
        \unskip${}\Rightarrow{}$\ignorespaces\fi}
\def\iff{\ifmmode\Longleftrightarrow \else
        \unskip${}\Longleftrightarrow{}$\ignorespaces\fi}
\newtheorem{Theorem}{Theorem}[section]
\newtheorem{Corollary}[Theorem]{Corollary}
\theoremstyle{definition}
\newtheorem{Example}[Theorem]{Example}
\let\epsilon\varepsilon
\let\phi=\varphi
\let\kappa=\varkappa
\def\qed{\ifhmode\textqed\fi
      \ifmmode\ifinner\quad\qedsymbol\else\dispqed\fi\fi}
\def\textqed{\unskip\nobreak\penalty50
       \hskip2em\hbox{}\nobreak\hfil\qedsymbol
       \parfillskip=0pt \finalhyphendemerits=0}
\def\dispqed{\rlap{\qquad\qedsymbol}}
\def\By{{\bf y}}
\def\Bb{{\bf b}}
\def\supp{{\rm supp}}
\def\Bd{{\bf d}}
\def\Bbeta{{\bf \beta}}
\def\Bone{{\bf 1}}
\title{
Markov chain Monte Carlo methods for the\\ 
Box-Behnken designs and\\ centrally symmetric configurations
}
\author{Satoshi Aoki%
\thanks{Graduate School of Science and Engineering (Science Course), Kagoshima University.}
\ 
, Takayuki Hibi%
\thanks{Department of Pure and Applied Mathematics, Graduate School of Information Science and Technology, Osaka University}
and Hidefumi Ohsugi%
\thanks{Department of Mathematical Sciences, School of Science and
Technology,
Kwansei Gakuin University}
}
\begin{document}
\maketitle

\begin{abstract}
We consider Markov chain Monte Carlo methods for calculating
conditional $p$ values of statistical models for count data 
arising in Box-Behnken designs. The statistical model we consider
is a discrete version of the first-order model in the response surface
methodology. For our models, the Markov basis, a key notion to construct a
connected Markov chain on a given sample space, is characterized as generators
of the toric ideals for the centrally symmetric configurations of root
system $D_n$. We show the structure of the Gr\"obner bases for these cases. 
A numerical example for an imaginary data set is given.
\end{abstract}

\section{Introduction}
\label{sec:intro}
After the work by Diaconis and Sturmfels 
(\cite{Diaconis-Sturmfels-1998}), a {\it Markov basis},
a key notion in the field of computational algebraic statistics,
 has attracted special attentions among researchers both in
statistics and algebra.
In this first work, they show the fundamental relation between 
the generators of toric ideals and the Markov bases
and establish
a procedure for sampling from discrete conditional distributions by 
constructing an irreducible Markov chain on a given sample space. 
By virtue of this relation, 
we can perform Markov chain Monte Carlo methods to estimate
conditional $p$ values for various statistical problems if we can obtain 
the generator of corresponding toric ideals. 
Readers can find various theoretical results on 
structure of Markov bases such as minimality or invariance, and 
Markov bases of important statistical models such as hierarchical models of
multi-dimensional contingency tables in \cite{Aoki-Hara-Takemura-2012}.

In parallel, 
it is also valuable to connect known classes of toric 
ideals to statistical models. Such a motivation yields attractive research
topics from algebraic fields to statistics. 
For example, 
\cite{Aoki-Hibi-Ohsugi-Takemura-2010} shows the relation between 
the generator of the toric ideals
for the  Segre--Veronese configuration and 
the special independence models in the testing problems of group-wise 
selections. This result is further
generalized to a class of configurations called nested configurations in 
\cite{Aoki-Hibi-Ohsugi-Takemura-2008}. As another example, relations 
between regular two-level fractional factorial designs and cut ideals
are shown in \cite{Aoki-Hibi-Ohsugi-2013}. 
The arguments in this paper comes from the same motivation to these works.

In this paper, we consider the statistical models corresponding to the 
algebraic object known as a {\it centrally symmetric configuration of root
system $D_n$.} 
The notion of centrally symmetric configurations
(\cite{Ohsugi-Hibi-2013}) is one of the new attractive topics in algebra
since it yields many ``toric rings''  
that have important algebraic properties (normal and Gorenstein).
See \cite{Ohsugi-Hibi-2013}.
On the other hand, 
Gr\"obner bases of the toric ideal 
arising from the configuration of $D_n$
is studied in \cite{Ohsugi-Hibi-2002}.
In addition, convex polytopes arising from 
the centrally symmetric configuration 
of $D_n$ are studied in \cite{rootpoly}.
In this paper, we show that the centrally symmetric configuration corresponds
to the first-order models for the symmetric designs of experiments for  
multi-level factors. As typical examples of such designs, we consider
Box-Behnken designs in this paper. 
Markov chain Monte Carlo procedure in the framework of design of experiments
is introduced in \cite{Aoki-Takemura-2010} and \cite{Aoki-Takemura-2009}. 
In these works, regular two-level and three-level designs are considered.
However, non-regular designs are difficult to treat in general. In this paper,
we present a new method for analyzing non-regular designs. 

The construction of this paper is as follows. In Section 2, we review the 
Markov chain Monte Carlo methods for design of experiments. In Section 3,
we give a definition of the centrally symmetric configuration and 
present statistical models. We also introduce the Box-Behnken designs and
show that the model matrix of the first-order models for the Box-Behnken
designs corresponds to the centrally symmetric configuration of root
system $D_n$. In Section 4, we give the Gr\"obner bases of the
centrally symmetric configuration of root system $D_n$. 
In Section 5, we give numerical example for an imaginary data set. 
Finally, we give some discussion in Section 6.

\section{Markov chain Monte Carlo methods for design of
experiments}
In this section, we introduce Markov chain Monte Carlo methods for
testing the fitting of the log-linear models for fractional factorial
designs with count observations. We consider the designs with $m$ controllable
factors. For $j = 1,\ldots,m$, write $A_j \in \mathbb{Q}$ as the level of
the $j$-th factor. For example, if $j$-th factor has three levels, it
is common to write $A_j = \{-1,0,1\}$. The full factorial design 
$P \subset \mathbb{Q}^m$ is given as $P = A_1\times\cdots\times A_m$ and
the fractional factorial design $F$ is a subset of $P$. 
Suppose there are $k$ runs (i.e., points) in $F$. For convenience, we
order the points of $F$ appropriately and consider a $k\times m$ design 
matrix $D = (d_{ij})$, where $d_{ij}$ is the level of $j$-th factor in $i$-th
run for $i = 1,\ldots,k,\ j = 1,\ldots,m$. 

We write the observations as $\By = (y_1,\ldots,y_k)'$, where $'$ denotes
the transpose. In this paper, we consider the Poisson sampling scheme, i.e., 
we suppose that the observations are counts of some events and 
the observation $\By$ is nonnegative integer vector.
We also suppose that only one
observation is obtained for each run. This is a natural setting 
because the set of the totals for each run is the sufficient 
statistics for the parameter in the Poisson sampling scheme. 
Therefore the observation $\By$ are realizations from $k$ 
mutually independent Poisson
random variables $Y_1,\ldots,Y_k$ with the mean 
parameter $\lambda_i = E(Y_i),\ i = 1,\ldots,k$.

We consider the log-linear model 
\begin{equation}
\log \lambda_i = \beta_0 + \beta_1x_{i1} + \cdots + \beta_{n}x_{in}
,\ i = 1,\ldots,k
\label{eqn:log-linear-model}
\end{equation}
for the parameter $\lambda_i, i = 1,\ldots,k$, where $x_{ij}$ is a $j$-th 
covariate for the $i$-th run and $n+1$ is the dimension 
of the parameter 
$\Bbeta = (\beta_0, \beta_1,\ldots,\beta_{n})'$.
If we write $x_{i0} = 1$ for $i = 1,\ldots,k$, the log-linear model
(\ref{eqn:log-linear-model}) is written as
\[
(\log \lambda_1,\ldots,\log\lambda_k)' = M\Bbeta,
\]
where $M = (x_{ij})_{i = 1,\ldots,k; j = 0,\ldots,n}$.
We call a $k \times (n+1)$ matrix $M$ as a {\it model matrix}
 of the log-linear model
(\ref{eqn:log-linear-model}).

To judge the fitting of the log-linear model 
(\ref{eqn:log-linear-model}), we can consider various goodness-of-fit tests.
In the goodness-of-fit tests, the model 
(\ref{eqn:log-linear-model}) is treated as the null model, whereas the
saturated model is treated as the alternative model. Under the null model
(\ref{eqn:log-linear-model}), the sufficient statistics for the (nuisance) 
parameter $\Bbeta$ is given by $M'\By$ from the factorization
\[
\prod_{i=1}^{k}e^{-\lambda_i}\frac{\lambda_i^{y_i}}{y_i!}
= \left(\prod_{i=1}^k\frac{1}{y_i!}\right)
\exp\left(\Bbeta'M'\By - \sum_{i=1}^k\lambda_i\right).
\]
Therefore the conditional distribution of $\By$ for the given sufficient 
statistics is written as
\begin{equation}
f(\By\ |\ M'\By = M'\By^o) = C(M'\By^o)^{-1}\prod_{i=1}^k\frac{1}{y_i!},
\label{eqn:conditional-distribution}
\end{equation}
where $\By^o$ is the observation vector and $C(M'\By^o)$ is the normalizing
constant determined from the sufficient statistics $M'\By^o$ as
\begin{equation}
C(M'\By^o) = \sum_{\By \in {\cal F}(M'\By^o)}\left(
\prod_{i=1}^k\frac{1}{y_i!}
\right)
\label{eqn:normalizing-constant}
\end{equation}
and
\begin{equation}
{\cal F}(M'\By^o) = \{\By\in \mathbb{Z}_{\geq 0}^k\ |\ M'\By = M'\By^o\}.
\label{eqn:fiber}
\end{equation}

In this paper, we consider goodness-of-fit tests based on the
conditional distribution (\ref{eqn:conditional-distribution}).
There are several choices of the test statistics $T(\By)$. Frequently
used choices are the likelihood ratio statistics
\begin{equation}
T(\By) = 2\sum_{i=1}^ky_i\log\frac{y_i}{\hat{\lambda}_i}
\label{eqn:likelihood-ratio}
\end{equation}
or the Pearson $\chi^2$ statistics
\[
T(\By) = \sum_{i=1}^{k}\frac{(y_i - \hat{\lambda}_i)^2}{\hat{\lambda}_i},
\]
where $\hat{\lambda}_i$ is the maximum likelihood estimate for $\lambda_i$
under the null model (i.e., fitted value). A simple way of judging the 
significance for the observed value $T(\By^o)$ is the asymptotic $p$ value
based on the asymptotic distribution $\chi_{k - n - 1}^2$ of the test
statistics. 
However, the fitting of the asymptotic approximation may be sometimes poor.
Therefore we consider conditional exact $p$ values in this paper. 
Based on the conditional distribution (\ref{eqn:conditional-distribution}), 
the exact conditional $p$ value is written as
\begin{equation}
p = \sum_{\By \in {\cal F}(M'\By^o)}f(\By\ |\ M'\By = M'\By^o)\Bone(T(\By)
\geq T(\By^o)),
\label{eqn:exact-p-value}
\end{equation}
where
\begin{equation}
\Bone(T(\By) \geq T(\By^o)) = \left\{\begin{array}{ll}
1, & \mbox{if}\ T(\By) \geq T(\By^o),\\
0, & \mbox{otherwise}
\end{array}
\right.
\label{eqn:test-function}
\end{equation}
is the test function of $T(\By)$. 
Of course, if we can calculate the exact $p$ value of (\ref{eqn:exact-p-value})
and (\ref{eqn:test-function}), it is best. However, 
the cardinality of the set ${\cal F}(M'\By^o)$ becomes huge for 
moderate
sizes of data and the calculation of the normalizing constant $C(M'\By^o)$ 
of (\ref{eqn:normalizing-constant}) is usually computationally infeasible. 
Instead, we consider a Markov chain 
Monte Carlo method to evaluate the conditional $p$ values. It should be
noted that we need not calculate the normalizing constant 
(\ref{eqn:normalizing-constant}) to evaluate the $p$ values by the Markov
chain Monte Carlo methods. This point is one of the important advantages
of the Markov chain Monte Carlo methods. 

To perform the Markov chain Monte Carlo procedure, we have to construct
an irreducible Markov chain over the conditional sample space 
(\ref{eqn:fiber}) with the stationary distribution 
(\ref{eqn:conditional-distribution}). If such a chain is constructed, 
we can sample from the chain as $\By^{(1)},.\ldots,\By^{(T)}$ after
discarding some initial burn-in steps, and estimate the $p$ values as
\[
\hat{p} = \frac{1}{T}\sum_{t=1}^T \Bone(T(\By^{(t)} \geq T(\By^o)).
\]
Such a chain can be constructed easily by {\it Markov bases}. 
Once a Markov basis is obtained, we can construct a connected, aperiodic and
reversible Markov chain over the conditional sample space 
(\ref{eqn:fiber}), which can be modified so as to have the stationary 
distribution (\ref{eqn:conditional-distribution}) by the Metropolis-Hastings
procedure. See \cite{Hastings-1970} or \cite{Diaconis-Sturmfels-1998} for
details.

The Markov basis is characterized algebraically as follows. 
Write the variables $x_1,\ldots,x_k$ and consider the polynomial
ring $K[x_1,\ldots,x_k]$ over a field $K$. Consider the integer kernel
of the transpose of the model matrix $M$, ${\rm Ker}_{\mathbb{Z}}M'$. 
For each $\Bb = (b_1,\ldots,b_k)' \in {\rm Ker}_{\mathbb{Z}}M'$, define
a binomial in $K[x_1,\ldots,x_k]$ as
\[
f_{\Bb} = \prod_{b_i > 0}x_i^{b_i} - \prod_{b_i < 0}x_i^{-b_i}\ .
\]
Then the binomial ideal in $K[x_1,\ldots,x_k]$, 
\[
I_{M'} = \left<\{f_{\Bb}\ |\ \Bb \in {\rm Ker}_{\mathbb{Z}}M'\}\right>
\]
is called a {\it toric ideal of the configuration $M'$}.
Then for a generating set of $I_{M'}$, 
$\{f_{\Bb^{(1)}},\ldots,f_{\Bb^{(s)}}\}$, the set of integer vectors
$\{\Bb^{(1)},\ldots,\Bb^{(s)}\}$ constitutes a Markov basis. 
See \cite{Diaconis-Sturmfels-1998} for details.

\section{Statistical models of the centrally symmetric 
configurations and Box-Behnken designs}
As we have seen in Section 2, if we can obtain a generator of $I_{M'}$, 
a toric ideal of the configuration $M'$, we can judge the fitting of
the statistical model expressed by the model matrix $M$ by the conditional
$p$ values estimated by the Markov chain Monte Carlo methods. For small sizes
of problems, we can rely on various softwares such as 
4ti2 (\cite{4ti2}) to compute generators of the toric ideals. However, 
for problems of large sizes, it is usually very difficult to compute
Markov bases or Gr\"obner bases for given ideals. 
On the other hand, if we 
have theoretical results on the structure of the corresponding ideals,
it is very easy to perform the Markov chain Monte Carlo procedure for
such configurations.
The {\it centrally symmetric configuration} is an example of such cases.

The centrally symmetric configuration is given in (\cite{Ohsugi-Hibi-2013}) 
as follows. Let $A \in \mathbb{Z}^{n\times s}$ be an integer matrix for which
no column vector is a zero vector. Then the centrally symmetric configuration
of $A$ is the $(n+1)\times (2s+1)$ integer matrix
\begin{equation}
A^{\pm} = \left(\begin{array}{c|ccc|ccc}
0 & & & & & & \\
\vdots & & A & & & -A & \\
0 & & & & & & \\ \hline
1 & 1 & \cdots & 1 & 1 & \cdots & 1
\end{array}
\right)\ .
\label{eqn:centrally-symmetric}
\end{equation}
It is known that
the ``toric ring''  $K[A^\pm]$ of $A^\pm$ is normal
and  Gorenstein
if there exists a squarefree initial ideal with respect to
a reverse lexicographic order where the smallest variable
corresponds to the first column of $A^\pm$.
See, e.g., \cite[Lemma 1.1]{HMOS-2014}.

As natural statistical models and designs where (the transpose of) the
 model matrix is centrally symmetric configurations, 
we consider the simple first-order models as follows. Suppose 
$F \subset A_1\times \cdots A_m \in \mathbb{Q}^m$ 
is a {\it symmetric design including the origin}, i.e., a design satisfying
\[
(0,\ldots,0) \in F\ \ \mbox{and}\ \ \Bd \in F\ \Rightarrow\ -\Bd \in F.
\]
Write $D = (d_{ij}) \in \mathbb{Z}^{k\times m}$ its design matrix, 
where $d_{ij}$ is the level of $j$-th factor in $i$-th run for
$i = 1,\ldots,k,\ j=1,\ldots,m$. Then we see that the transpose of the model
matrix
\begin{equation}
M = \left(\begin{array}{cc}
1 & \\
\vdots & D\\
1 & 
\end{array}
\right)
\label{eqn:M-for-first-order-model}
\end{equation}
is centrally symmetric. Corresponding log-linear model 
(\ref{eqn:log-linear-model})
is written as
\begin{equation}
\log \lambda_i = \beta_0 + \beta_1 d_{i1} + \cdots + \beta_m d_{im},\ 
i = 1,\ldots,k. 
\label{eqn:first-order-model}
\end{equation}
We call the model (\ref{eqn:first-order-model}) 
as a {\it first-order model} in this paper. 
The interpretation of the first-order model (\ref{eqn:first-order-model}) is 
as follows. Suppose there are adequate meanings both in 
the order of the factors 
and the interval of the factors for the design $F$. Then the model 
(\ref{eqn:first-order-model}) means that the logarithm of the influence
to the response variable is
proportional to the difference of the levels for each factor. 
In other words, the parameter $\beta_j$ represents the main effect of the
$j$-th factor for $j = 1,\ldots,m$. 
The first order model (\ref{eqn:first-order-model}) is a discrete version 
of the first-order model arising in the context of the 
{\it response surface methodology}. See Section 9 of \cite{Wu-Hamada}, for
example. A typical example of the symmetric designs is also arising in the
context of the response surface methodology as {\it Box-Behnken designs}. 

The Box-Behnken design is a family of three-level fractional factorial 
designs introduced by \cite{Box-Behnken-1960}. This design is constructed
by combining two-level factorial designs with balanced (or partially balanced) 
incomplete block designs
in a particular manner. To illustrate the concept of the Box-Behnken designs, 
consider the case of three factors (i.e., $m = 3$). A balanced incomplete
block design with three factors and three blocks is given as follows.
\[
\begin{array}{c|ccc}\hline
& \multicolumn{3}{|c}{\mbox{Factor}}\\
\mbox{Block} &\  1\  & 2 &\ 3\ \\ \hline
1 & \circ & \circ & \\
2 & \circ & & \circ\\
3 & & \circ & \circ\\ \hline
\end{array}
\]
The Box-Behnken design is constructed by replacing the two circles ($\circ$)
in each block by the two columns of the two-level $2^2$ design and add a column
of zeros where a circle does not appear, and adding a run at the origin. 
In this example, the Box-Behnken design is constructed as follows.
\[
\begin{array}{rrr}\hline
\multicolumn{3}{c}{\mbox{Factor}}\\
\multicolumn{1}{c}{1} & \multicolumn{1}{c}{2} & \multicolumn{1}{c}{3} \\ \hline
-1 & -1 & 0\\
-1 & 1 & 0\\
1 & -1 & 0\\
1 & 1 & 0\\
-1 & 0 & -1\\
-1 & 0 & 1\\
1 & 0 & -1\\
1 & 0 & 1\\
0 & -1 & -1\\
0 & -1 & 1\\
0 & 1 & -1\\
0 & 1 & 1\\
0 & 0 & 0\\ \hline
\end{array}
\]
Similarly, by combining various incomplete block designs with
two-level full (or fractional) factorial designs, various three-level
fractional factorial designs are obtained. 
In this paper, we only consider the Box-Behnken designs constructed from
the two-level $2^2$ design and the
balanced incomplete block designs with the block size $2$, 
the number of factors (or treatments) $m$, 
the number of blocks $m(m-1)/2$ and 
the number of replicates for each factor $m-1$, 
with a single run at the origin. Note that
it is common to consider the designs with several
runs at the origins in this field. 
See \cite{Box-Behnken-1960} or Chapter 9 of \cite{Wu-Hamada} for details.
However, we only consider the designs
with single observations even in the origin. 
Therefore the Box-Behnken design considered in this paper has $2m(m-1) + 1$
runs for $m$ factor case. 

For these Box-Behnken designs, we consider the first-order model 
(\ref{eqn:first-order-model}) with the model matrix 
(\ref{eqn:M-for-first-order-model}). Note that $n = m$ in our cases. 
Then we see that the transpose of the model matrix, $M'$, has the
centrally symmetric structure (\ref{eqn:centrally-symmetric}) 
with $s = m(m-1)$. For example, the transpose of the 
model matrix of the first-order model
for the three factors case is given by
\[
\left(
\begin{array}{rrrrrrrrrrrrr}
0 & -1 & -1 & 1 & 1 & -1 & -1 & 1 & 1 & 0 & 0 & 0 & 0 \\
0 & -1 &  1 &-1 & 1 &  0 &  0 & 0 & 0 &-1 &-1 & 1 & 1\\
0 &  0 &  0 & 0 & 0 & -1 &  1 &-1 & 1 &-1 & 1 &-1 & 1\\ 
1 &  1 &  1 & 1 & 1 &  1 &  1 & 1 & 1 & 1 & 1 & 1 & 1
\end{array} 
\right),
\]
which is the centrally symmetric configuration of
\[
\left(
\begin{array}{rrrrrr}
-1 & -1 & -1 & -1 & 0 & 0\\
-1 &  1 &  0 &  0 &-1 &-1\\
 0 &  0 & -1 &  1 &-1 & 1
\end{array}
\right).
\]

Following the arguments of Section 2, we can judge the fitting of the 
first-order model for the Box-Behnken designs by the Markov chain Monte 
Carlo methods, if we obtain the generators of the toric ideal of 
the configuration of this type.

\section{Gr\"obner bases of centrally symmetric configurations
 of root system $D_n$}
Now we show the structure of the Gr\"obner bases of the centrally 
symmetric configurations for the Box-Behnken designs. 
Because the Gr\"obner basis is a generator of the ideals, we can use the
Gr\"obner basis as a Markov basis. 
As an important fact, the transpose of the model matrix for 
the first-order models for
the Box-Behnken designs is characterized as the configuration of the 
{\it root system $D_n$}. 

Let $\eb_{1}, \ldots, \eb_{n}$ stand for the canonical unit coordinate
vectors of $\RR^{n}$ and ${\bf D}_{n}^{\pm} \subset \RR^{n}$ 
the finite set which consists of the origin ${\bf 0}$ of $\RR^{n}$ 
together with
\[
\eb_{i}+\eb_{j}, \, \, \eb_{i}-\eb_{j}, \, \, -\eb_{i}+\eb_{j}, 
\, \, -\eb_{i}-\eb_{j}, \, \, \, \, \, 1 \leq i < j \leq n.
\]
Let $K[{\bf t}, {\bf t}^{-1}, s] 
= K[t_{1}, \ldots, t_{n}, t_{1}^{-1}, \ldots, t_{n}^{-1}, s]$
denote the Laurent polynomial ring in $2n + 1$ variables over a field $K$.
The {\em toric ring} of ${\bf D}_{n}^{\pm}$ is the subring 
$K[{\bf D}_{n}^{\pm}]$ of 
$K[{\bf t}, {\bf t}^{-1}, s]$
which is generated by $s$ together with 
$t_{i}t_{j}s, t_{i}t_{j}^{-1}s, t_{i}^{-1}t_{j}s, t_{i}^{-1}t_{j}^{-1}s$, 
where $1 \leq i < j \leq n$.
Let $K[{\bf x}, z]$ be the polynomial ring over $K$
in the variables $z$ together with $x_{ij}^{pq}$, where
$1 \leq i < j \leq n$ and $p, q \in \{ +, - \}$.
We then define the surjective ring homomorphism 
$\pi \, : \, K[{\bf x}, z] \to K[{\bf D}_{n}^{\pm}]$ by setting
$\pi(z) = s$ and $\pi(x_{ij}^{pq}) = t_{i}^{p'}t_{j}^{q'}s$,
where $+' = 1$ and $-' = -1$.  For example
$\pi(x_{25}^{-+}) = t_{2}^{-1}t_{5}s$.
The {\em toric ideal} of ${\bf D}_{n}^{\pm}$ is the kernel
$I_{{\bf D}_{n}^{\pm}}$ of $\pi$.

Fix an ordering $<$ of the variables of $K[{\bf x}]$ with the property that
$x_{ij}^{pq} < x_{k\ell}^{rs}$ if either (i) $i < k$ 
or (ii) $i = k$ and $j > \ell$.  Let $<_{\rm lex}$ denote the lexicographic order
on $K[{\bf x}]$ induced by the ordering $<$.  We introduce the monomial
order $\prec$ on $K[{\bf x}, z]$ defined as follows:
One has
$\prod_{\xi=1}^{a}x_{i_{\xi}j_{\xi}}^{p_{\xi}q_{\xi}}z^{\alpha} 
\prec 
\prod_{\nu=1}^{b}x_{k_{\nu}\ell_{\nu}}^{r_{\nu}s_{\nu}}z^{\beta}$,
where $\alpha, \beta \in \ZZ_{\geq 0}$, 
if 
\begin{itemize}
\item
$a + \alpha < b + \beta$, or
\item
$a + \alpha = b + \beta$ and $\alpha > \beta$, or
\item
$a = b, \alpha = \beta$ and
$\prod_{\xi=1}^{a}x_{i_{\xi}j_{\xi}}^{p_{\xi}q_{\xi}} 
<_{\rm lex}
\prod_{\nu=1}^{b}x_{k_{\nu}\ell_{\nu}}^{r_{\nu}s_{\nu}}$. 
\end{itemize}

Let $\Gc$ denote the set of binomials
\begin{enumerate}
\item[(i)]$x_{ij}^{pq}x_{k\ell}^{rs} - x_{ik}^{pr}x_{j\ell}^{qs}$, \, 
$1 \leq i < j < k < \ell \leq n$;
\item[(ii)]$x_{i\ell}^{ps}x_{jk}^{qr} - x_{ik}^{pr}x_{j\ell}^{qs}$, \, 
$1 \leq i < j < k < \ell \leq n$;
\item[(iii)]$x_{ij}^{+p}x_{ik}^{-q} - x_{jk}^{pq}z$, \, 
$|\{i,j,k\}|=3$;\footnote{\,
For $j < i$, 
the notation $x_{ij}^{pq}$ is identified with
the variable $x_{ji}^{qp}$.}
\item[(iv)]$x_{ij}^{++}x_{ij}^{--} - z^{2}$, \, $1 \leq i < j \leq n$;
\item[(v)]$x_{ij}^{+-}x_{ij}^{-+} - z^{2}$, \, $1 \leq i < j \leq n$;
\item[(vi)]$x_{ij}^{p+}x_{ij}^{p-} - x_{1i}^{+p}x_{1i}^{-p}$, \,
$1 < i \neq j \leq n$;
\item[(vii)]$x_{1j}^{p+}x_{1j}^{p-} - x_{1n}^{p+}x_{1n}^{p-}$, \,
$1 < j < n$;
\item[(viii)]$x_{1i}^{+p}x_{1i}^{-p}x_{jk}^{qr} - x_{ij}^{pq}x_{ik}^{pr}z$, \,
$1 < i, j, k \leq n, |\{i,j,k\}|=3$
\end{enumerate} 
belonging to $I_{{\bf D}_{n}^{\pm}}$.

\begin{Theorem}
\label{gb}
The set $\Gc$ of binomials is a Gr\"obner basis of
$I_{{\bf D}_{n}^{\pm}}$ with respect to $\prec$.
\end{Theorem}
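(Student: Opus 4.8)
\emph{Strategy.}
The ideal $I_{{\bf D}_{n}^{\pm}}=\ker\pi$ is toric, and by construction every binomial of $\Gc$ lies in it, so $\langle\,\ini_{\prec}(g):g\in\Gc\,\rangle\subseteq\ini_{\prec}(I_{{\bf D}_{n}^{\pm}})$ holds automatically; the content of the theorem is the reverse inclusion. The plan is to prove the equivalent statement that $\pi$ is \emph{injective on the standard monomials} of $K[{\bf x},z]$, i.e.\ on those monomials divisible by no $\ini_{\prec}(g)$, $g\in\Gc$. This suffices: reducing any $f\in I_{{\bf D}_{n}^{\pm}}$ modulo $\Gc$ yields a normal form all of whose terms are standard, and if $\pi$ separates standard monomials then those terms map to distinct Laurent monomials of $K[{\bf t},{\bf t}^{-1},s]$, hence to $K$-linearly independent elements of $K[{\bf D}_{n}^{\pm}]$, forcing the normal form to be $0$. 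First I would check that $\prec$ is a genuine term order: it refines the total grading $\deg x_{ij}^{pq}=\deg z=1$, then breaks ties by the exponent of $z$ read in reverse, then by $<_{\rm lex}$, and with respect to it $z$ is the smallest variable --- exactly the hypothesis of the normality/Gorenstein criterion quoted for $A^{\pm}$.

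\emph{Step 1: identify the initial terms.}
Reading off $\ini_{\prec}(g)$ for the eight families directly from the three clauses of $\prec$: for (iii), (iv), (v) and (viii) the two terms share the same total degree while the one carrying more $z$'s is the $\prec$-smaller, so the initial term is the $z$-free monomial, namely $x_{ij}^{+p}x_{ik}^{-q}$, $x_{ij}^{++}x_{ij}^{--}$, $x_{ij}^{+-}x_{ij}^{-+}$ and $x_{1i}^{+p}x_{1i}^{-p}x_{jk}^{qr}$ respectively; for (i), (ii), (vi) and (vii) both terms are squarefree quadratic ${\bf x}$-monomials and $<_{\rm lex}$ decides, the initial term being in each case the first monomial written, since it carries the $<$-largest variable occurring in the relation ($x_{k\ell}$ for (i), $x_{jk}$ for (ii), the variable on the index pair avoiding $1$ for (vi), and $x_{1j}$ for (vii), which beats $x_{1n}$ because $j<n$). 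In particular every generator of $\ini_{\prec}(\Gc)$ is squarefree, consistent with the quoted normality and Gorenstein property.

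\emph{Step 2: standard monomials, and uniqueness of representatives.}
A monomial is standard precisely when its ${\bf x}$-part avoids all of the above, which translates into: among any two of its variables sitting on disjoint index pairs, writing the four indices as $a<b<c<d$, the pairs must be $\{a,c\}$ and $\{b,d\}$, with arbitrary signs (what (i), (ii) exclude); any two of its variables sharing exactly one index must carry the same sign in that shared slot (what (iii) excludes); and on a repeated index pair, or on a pair through the index $1$, only the sign multiplicities not forbidden by (iv)--(viii) may occur. It then remains to show that for each target $s^{N}t_{1}^{c_{1}}\cdots t_{n}^{c_{n}}$ of $\pi$ there is \emph{at most one} standard monomial with that image. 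Existence is automatic (reduce any preimage), so only uniqueness is at stake: assuming two standard monomials with equal image, one cancels common factors and extracts a contradiction from the combinatorial constraints, which I would organise by induction on $n$ --- stripping off the index $n$, or the index $1$ --- guided by the balancing relations read from the exponents $c_{i}$ and $N$.

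\emph{Main obstacle, and a fallback.}
The hard part is this last uniqueness/reconstruction step: showing that the constraints inherited from (i)--(viii) leave exactly one admissible signed configuration in each fibre of $\pi$. The delicate interaction is between the ``crossings only'' restriction on disjoint pairs (from (i), (ii)) and the sign restrictions (iv)--(vii), further complicated by the special role of the index $1$ and of the variable $z$, which jointly control how many ``origin'' factors $x_{1i}^{+p}x_{1i}^{-p}$ and how many $z$'s a standard representative must carry (this is what (viii) regulates). Should this conceptual route become unwieldy, one can instead apply Buchberger's criterion directly to $\Gc$; there the enumeration of $S$-pairs is routine except for those matching the unique degree-three initial monomial of (viii) against a degree-two initial monomial of (i), (ii) or (iii), where the $S$-polynomial does not reduce in a single step and one must interpose a suitably chosen further instance of (viii) or (iii) to close the reduction chain. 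Either way, these are the only genuinely delicate points; the rest is bookkeeping.
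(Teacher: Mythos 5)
Your framework is the same as the paper's: the authors also reduce the theorem to the criterion of \cite[(0.1)]{Ohsugi-Hibi-2002}, namely that $\pi$ must separate monomials not lying in $\langle {\rm in}_{\prec}(\Gc)\rangle$, and your identification of the initial monomials of (i)--(viii) as the first-written monomials is correct and agrees with theirs. But the proposal stops exactly where the proof begins. The entire content of the argument is the uniqueness step that you defer to ``Step 2'' and then explicitly concede as the ``main obstacle'': given two relatively prime standard monomials $u=\prod_{\xi}x_{i_{\xi}j_{\xi}}^{p_{\xi}q_{\xi}}$ and $v=\prod_{\nu}x_{k_{\nu}\ell_{\nu}}^{r_{\nu}s_{\nu}}z^{\beta}$ with $\pi(u)=\pi(v)$, one must actually show $u=v=1$. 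The paper does this by first using the absence of the initial monomials of (i) and (ii) to force the interleaving $i_{1}\leq\cdots\leq i_{a}\leq j_{1}\leq\cdots\leq j_{a}$, and then ruling out the three sign-conflict configurations ($i_{a'}=i_{a''}$ with $p_{a'}\neq p_{a''}$; $i_{a}=j_{1}$ with $p_{a}\neq q_{1}$; $j_{a'}=j_{a''}$ with $q_{a'}\neq q_{a''}$) via a delicate interplay of (iii)--(viii), in which the special role of the index $1$ and the forced form $\pi(u)=t_{j}^{a}s^{a}$ in the degenerate subcases are the decisive points. Once these are excluded, no cancellation occurs in the Laurent monomial $\pi(u)$, which pins down $u$ and $v$ completely and contradicts coprimality unless $a=0$. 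None of this is present in your text: ``one cancels common factors and extracts a contradiction from the combinatorial constraints, organised by induction on $n$'' is a description of a hoped-for proof, not a proof, and the proposed induction on $n$ is not obviously workable (stripping the index $n$ or $1$ does not preserve the hypotheses cleanly, because the families (vi)--(viii) tie generic indices to the index $1$ and (vii) ties them to the index $n$).

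The fallback via Buchberger's criterion is likewise only gestured at; you correctly anticipate that the $S$-pairs involving the cubic initial monomial of (viii) are the troublesome ones, but you do not exhibit the reduction chains. As it stands, the proposal establishes the easy containment ${\rm in}_{\prec}(\Gc)\subseteq {\rm in}_{\prec}(I_{{\bf D}_{n}^{\pm}})$ and the correct reading of the term order, and leaves the reverse inclusion --- the theorem itself --- unproved.
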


\begin{proof}
In general, if $f = u - v$ is a binomial, then $u$ is called the {\em first}
monomial of $f$ and $v$ is called the {\em second} monomial of $f$. 
The initial monomial of each of the binomials (i) -- (viii) 
with respect to $\prec$ is its first monomial. 
Let ${\rm in}_{\prec}(\Gc)$ denote the set of initial monomials of binomials 
belonging to $\Gc$.  It follows from \cite[(0.1)]{Ohsugi-Hibi-2002} that,
in order to show that $\Gc$ is a Gr\"obner basis of
$I_{{\bf D}_{n}^{\pm}}$ with respect to $\prec$, what we must prove is
the following:
($\clubsuit$) If $u$ and $v$ are monomials belonging to $K[{\bf x}, z]$
with $u \neq v$ such that 
$u \not\in {\rm in}_{\prec}(\Gc)$ and $v \not\in {\rm in}_{\prec}(\Gc)$,
then $\pi(u) \neq \pi(v)$. 

Let $u$ and $v$ be monomials belonging to $K[{\bf x}, z]$.
Write
\[
u = x_{i_{1}j_{1}}^{p_{1}q_{1}} \cdots x_{i_{a}j_{a}}^{p_{a}q_{a}}
z^{\alpha}, 
\, \, \, \, \, \, \, \, \, \, 
v = x_{k_{1}\ell_{1}}^{r_{1}s_{1}} \cdots x_{k_{b}\ell_{b}}^{r_{b}s_{b}}
z^{\beta}
\]
with
\[
i_{1} \leq \cdots \leq i_{a}, \, \, \, \, \, \, \, \, \, \, 
k_{1} \leq \cdots \leq k_{b}.
\]

Let $\pi(u) = \pi(v)$.
Then $a + \alpha = b + \beta$.
Suppose that 
$u \not\in {\rm in}_{\prec}(\Gc)$ and $v \not\in {\rm in}_{\prec}(\Gc)$.
Furthermore, suppose that $u$ and $v$ are relatively prime.
Especially either $\alpha = 0$ or $\beta = 0$.  
Let, say, $\alpha = 0$.
In other words,
\[
u = x_{i_{1}j_{1}}^{p_{1}q_{1}} \cdots x_{i_{a}j_{a}}^{p_{a}q_{a}}, 
\, \, \, \, \, \, \, \, \, \, 
v = x_{k_{1}\ell_{1}}^{r_{1}s_{1}} \cdots x_{k_{b}\ell_{b}}^{r_{b}s_{b}}
z^{\beta},
\]
where $\beta = a - b$.

Let $i_{a'} < i_{a''}$, where $1 \leq a' < a'' \leq a$.  
Then, by using (i), one has $i_{a''} \leq j_{a'}$
and, by using (ii), one has $j_{a'} \leq j_{a''}$.  Hence
$i_{a'} < i_{a''} \leq j_{a'} \leq j_{a''}$.
It then follows that
\begin{eqnarray}
\label{Boston}
i_{1} \leq \cdots \leq i_{a} \leq j_{1} \leq \cdots \leq j_{a}, 
\, \, \, \, \, \, \, \, \, \, 
k_{1} \leq \cdots \leq k_{b} \leq \ell_{1} \leq \cdots \leq \ell_{b}.
\end{eqnarray}

We claim that none of the followings arises:
\begin{enumerate}
\item[{\bf (\,$\sharp$\,)}]
$i_{a'} = i_{a''}$ and $p_{a'} \neq p_{a''}$; 
\item[{\bf (\,$\flat$\,)}]
$i_{a} = j_{1}$ and $p_{a} \neq q_{1}$;
\item[{\bf (\,$\natural$\,)}]
$j_{a'} = j_{a''}$ and $q_{a'} \neq q_{a''}$.  
\end{enumerate}

\medskip

\noindent
{\bf (Case (\,$\sharp$\,))}
Let $i_{a'} = i_{a''}$ and $p_{a'} \neq p_{a''}$, where
$1 \leq a' < a'' \leq a$.
Then, by using (iii), one has $j_{a'} = j_{a''}$.
Then, by using (iv) and (v), one has $q_{a'} = q_{a''}$.
Moreover, by using (vi), one has $i_{a'} = i_{a''} = 1$. 
Thus $x_{i_{a'}j_{a'}}^{p_{a'}q_{a'}}x_{i_{a''}j_{a''}}^{p_{a''}q_{a''}}
= x_{1j_{a'}}^{+q_{a'}}x_{1j_{a'}}^{-q_{a'}}$ divides $u$.  
Then, by using (viii),
each variable $x_{i_{a^{*}}j_{a^{*}}}^{p_{a^{*}}q_{a^{*}}}$
($\neq x_{1j_{a'}}^{+q_{a'}}$, $\neq x_{1j_{a'}}^{-q_{a'}}$)
which divides $u$ satisfies either $i_{a^{*}} = 1$
or $|\{j_{a'},i_{a^{*}},j_{a^{*}}\}| = 2$ with $i_{a^{*}} > 1$.
Then, by using (iii), if $i_{a^{*}} = 1$, then $j_{a'} = j_{a^{*}}$.
Thus $q_{a'} \neq q_{a^{*}}$.
However, by using (iv) and (v), a contradiction arises.
Hence either $j_{a'} = i_{a^{*}} > 1$ or
$j_{a'} = j_{a^{*}} > 1$.  In other words, $u$ is divided by 
either $x_{1j_{a'}}^{+q_{a'}}x_{1j_{a'}}^{-q_{a'}}
x_{j_{a'}j_{a^{*}}}^{p_{a^{*}}q_{a^{*}}}$ or
$x_{1j_{a'}}^{+q_{a'}}x_{1j_{a'}}^{-q_{a'}}
x_{i_{a^{*}}j_{a'}}^{p_{a^{*}}q_{a^{*}}}$.
Then, by using (iii), if $j_{a'} = i_{a^{*}}$, then $q_{a'} = p_{a^{*}}$.
Again, by using (iii),  
if $j_{a'} = j_{a^{*}}$, then $q_{a'} = q_{a^{*}}$.
Hence either 
$x_{i_{a^{*}}j_{a^{*}}}^{p_{a^{*}}q_{a*}} 
= x_{j_{a'}j_{a^{*}}}^{q_{a'}q_{a^{*}}}$
or
$x_{i_{a^{*}}j_{a^{*}}}^{p_{a^{*}}q_{a*}} 
= x_{i_{a^{*}}j_{a'}}^{p_{a^{*}}q_{a'}}$.
As a result, either $t_{j_{a'}}^{a}$ or $t_{j_{a'}}^{-a}$
divides $\pi(u)$.  Let, say, $t_{j_{a'}}^{a}$ divides $\pi(u)$.
Since $\pi(u) = \pi(v)$, it follows that
$\beta = a - b = 0$ and that $t_{j_{a'}}^{a}$ divides $\pi(v)$.

Let, say, either $t_{j}$ or $t_{j}^{-1}$, 
where $j \neq j_{a'}$, divide $\pi(u) = \pi(v)$. 
Then either $x_{j_{a'}j}^{++}$ or $x_{j_{a'}j}^{+-}$ must divide
both $u$ and $v$, which contradicts the fact that
$u$ and $v$ are relatively prime.  Hence 
$\pi(u) = \pi(v) = t_{j_{a'}}^{a} s^{a}$. 
Thus a quadratic monomial $x_{j_{a'}j_{0}}^{++}x_{j_{a'}j_{0}}^{+-}$,
where $j_{0} \neq j_{a'}$, divides $u$.  Then, since $j_{a'} > 1$,
by using (vi), one has $j_{0} = 1$.  Hence $x_{1j_{a'}}^{++}x_{1j_{a'}}^{-+}$
divides both $u$ and $v$, a contradiction.

\medskip

\noindent
{\bf (Case (\,$\flat$\,))}
Let $i_{a} = j_{1}$ and $p_{a} \neq q_{1}$.  Then, by using (iii), 
one has $u \in {\rm in}_{\prec}(\Gc)$.

\medskip

\noindent
{\bf (Case (\,$\natural$\,))}
Let $j_{a'} = j_{a''}$ and $q_{a'} \neq q_{a''}$, where  
$1 \leq a' < a'' \leq a$.
Then, by using (iii), one has $i_{a'} = i_{a''}$.
Furthermore, by using (iv) and (v), one has $p_{a'} = p_{a''}$.
If $i_{a'} > 1$, then $u$ is divided by 
$x_{i_{a'}j_{a'}}^{p_{a'}q_{a'}}x_{i_{a''}j_{a''}}^{p_{a''}q_{a''}}
= x_{i_{a'}j_{a'}}^{p_{a'}q_{a'}}x_{i_{a'}j_{a'}}^{p_{a'}-q_{a'}}$.
Thus, by using (vi), one has $u \in {\rm in}_{\prec}(\Gc)$.
Hence $i_{a'} = i_{a''} = 1$.  Thus, by using (vii), 
one has $j_{a'} = j_{a''} = n$.

Let $i_{a^{*}} > 1$ for some $1 \leq a^{*} \leq n$. 
Then $a' < a^{*}$ and $1 = i_{a'} < i_{a^{*}} < n = j_{a'} \leq j_{a^{*}}$.
Hence $j_{a'} = j_{a^{*}} = n$.  
However, since $i_{a^{*}} > 1$, it follows that
$q_{a'} = q_{a^{*}}$.  
Thus 
$x_{i_{a''}j_{a''}}^{p_{a''}q_{a''}}
x_{i_{a^{*}}j_{a^{*}}}^{p_{a^{*}}q_{a^{*}}}
= x_{1n}^{p_{a'}-q_{a'}}
x_{i_{a^{*}}n}^{p_{a^{*}}q_{a'}}$.
Since $i_{a^{*}} < n$, by using (iii), 
one has $u \in {\rm in}_{\prec}(\Gc)$.
As a result, $i_{a^{*}} = 1$ for all $1 \leq a^{*} \leq a$.

Now, since {\bf (\,$\sharp$\,)} cannot occur, one has $p_{1} = p_{a^{*}}$
for all $1 \leq a^{*} \leq a$.  Thus $\pi(u)$ is divided by
either $t_{1}^{a}$ or $t_{1}^{-a}$.  
Let, say, $t_{1}^{a}$ divide $\pi(u)$.
Since $\pi(u) = \pi(v)$,
it follows that $\beta = a - b = 0$ and $t_{1}^{a}$ divides $\pi(v)$.
Let, say, either $t_{j}$ or $t_{j}^{-1}$, 
where $j \neq 1$, divide $\pi(u) = \pi(v)$. 
Then either $x_{1j}^{++}$ or $x_{1j}^{+-}$ must divide
both $u$ and $v$, which contradicts the fact that
$u$ and $v$ are relatively prime.  Hence 
$\pi(u) = \pi(v) = t_{1}^{a} s^{a}$. 
Thus a quadratic monomial $x_{1j_{0}}^{++}x_{1j_{0}}^{+-}$,
where $j_{0} \neq 1$, divides $u$.  Then,
by using (vii), one has $j_{0} = n$.  Thus $x_{1n}^{++}x_{1n}^{+-}$
divides both $u$ and $v$, a contradiction.

\medskip

Finally, since none of {\bf (\,$\sharp$\,)}, {\bf (\,$\flat$\,)}
and {\bf (\,$\natural$\,)} arises, it follows that 
no cancellation occurs in the expression of the Laurent monomial 
\[
\pi(u) = t_{i_{1}}^{p'_{1}} \cdots t_{i_{a}}^{p'_{a}}
t_{j_{1}}^{q'_{1}} \cdots t_{j_{a}}^{q'_{a}}.
\]
Since $\pi(u) = \pi(v)$, one has $\beta = a - b = 0$.
Furthermore, since no cancellation occurs in the expression of 
the Laurent monomial 
\[
\pi(v) = t_{k_{1}}^{r'_{1}} \cdots t_{k_{a}}^{r'_{a}}
t_{\ell_{1}}^{s'_{1}} \cdots t_{\ell_{a}}^{s'_{a}},
\]
it follows from (\ref{Boston}) that
\[
i_{\xi} = k_{\xi}, \, j_{\xi} = \ell_{\xi}, \, p_{\xi} = r_{\xi}, \,
q_{\xi} = s_{\xi}, \, \, \, \, \, 1 \leq \xi \leq a.
\]
Recall that $u$ and $v$ are relatively prime.  Hence $a = 0$ and $u = v = 1$.
Consequently the required condition ($\clubsuit$) is satisfied.
\, \, \, \, \, \, \, \, \, \,
\, \, \, \, \, \, \, \, \, \, 
\, \, \, \, \, \, \, \,
\end{proof}

\begin{Example} \label{ex:n=3-case}
Consider the case of $n=3$. 
From Theorem \ref{gb}, 
\begin{enumerate}

\item[(iii)]
$x_{12}^{++}x_{13}^{-+} - x_{23}^{++}z$, \, 
$x_{12}^{++}x_{13}^{--} - x_{23}^{+-}z$, \, 
$x_{12}^{+-}x_{13}^{-+} - x_{23}^{-+}z$, \, 
$x_{12}^{+-}x_{13}^{--} - x_{23}^{--}z$, \, 

$x_{12}^{-+}x_{13}^{++} - x_{23}^{++}z$, \, 
$x_{12}^{-+}x_{13}^{+-} - x_{23}^{+-}z$, \, 
$x_{12}^{--}x_{13}^{++} - x_{23}^{-+}z$, \, 
$x_{12}^{--}x_{13}^{+-} - x_{23}^{--}z$, \, 

$x_{12}^{++}x_{23}^{-+} - x_{13}^{++}z$, \, 
$x_{12}^{++}x_{23}^{--} - x_{13}^{+-}z$, \, 
$x_{12}^{-+}x_{23}^{-+} - x_{13}^{-+}z$, \, 
$x_{12}^{-+}x_{23}^{--} - x_{13}^{--}z$, \, 

$x_{12}^{+-}x_{23}^{++} - x_{13}^{++}z$, \, 
$x_{12}^{+-}x_{23}^{+-} - x_{13}^{+-}z$, \, 
$x_{12}^{--}x_{23}^{++} - x_{13}^{-+}z$, \, 
$x_{12}^{--}x_{23}^{+-} - x_{13}^{--}z$, \,

$x_{13}^{++}x_{23}^{+-} - x_{12}^{++}z$, \, 
$x_{13}^{++}x_{23}^{--} - x_{12}^{+-}z$, \, 
$x_{13}^{-+}x_{23}^{+-} - x_{12}^{-+}z$, \, 
$x_{13}^{-+}x_{23}^{--} - x_{12}^{--}z$, \, 

$x_{13}^{+-}x_{23}^{++} - x_{12}^{++}z$, \, 
$x_{13}^{+-}x_{23}^{-+} - x_{12}^{+-}z$, \, 
$x_{13}^{--}x_{23}^{++} - x_{12}^{-+}z$, \, 
$x_{13}^{--}x_{23}^{-+} - x_{12}^{--}z$, \,

\item[(iv)]
$x_{12}^{++}x_{12}^{--} - z^{2}$, \, 
$x_{13}^{++}x_{13}^{--} - z^{2}$, \, 
$x_{23}^{++}x_{23}^{--} - z^{2}$, \, 

\item[(v)]
$x_{12}^{+-}x_{12}^{-+} - z^{2}$, \, 
$x_{13}^{+-}x_{23}^{-+} - z^{2}$, \, 
$x_{23}^{+-}x_{13}^{-+} - z^{2}$, \, 

\item[(vi)]
$x_{23}^{++}x_{23}^{+-} - x_{12}^{++}x_{12}^{-+}$, \,$x_{23}^{-+}x_{23}^{--} - x_{12}^{+-}x_{12}^{--}$, \,

$x_{23}^{++}x_{23}^{-+} - x_{13}^{++}x_{13}^{-+}$, \,
$x_{23}^{+-}x_{23}^{--} - x_{13}^{+-}x_{13}^{--}$, \,

\item[(vii)]
$x_{12}^{++}x_{12}^{+-} - x_{13}^{++}x_{13}^{+-}$, \ 
$x_{12}^{-+}x_{12}^{--} - x_{13}^{-+}x_{13}^{--}$
\end{enumerate} 
is the Gr\"obner basis 
$\Gc$ of $I_{{\bf D}_{n}^{\pm}}$ with respect to $\prec$.
\end{Example}

We have the following 
since the binomial 
$x_{1i}^{+p}x_{1i}^{-p}x_{jk}^{qr} - x_{ij}^{pq}x_{ik}^{pr}z$ (viii) where
$1 < i, j, k \leq n$ and $|\{i,j,k\}|=3$ satisfies
\begin{eqnarray*}
x_{1i}^{+p}x_{1i}^{-p}x_{jk}^{+r} - x_{ij}^{p+}x_{ik}^{pr}z
&=& 
x_{jk}^{+r} (x_{1i}^{+p}x_{1i}^{-p} - x_{ij}^{p+}x_{ij}^{p-})
+  x_{ij}^{p+} ( x_{ij}^{p-} x_{jk}^{+r} - x_{ik}^{pr}z ),
\\
x_{1i}^{+p}x_{1i}^{-p}x_{jk}^{-r} - x_{ij}^{p-}x_{ik}^{pr}z
&=&
x_{jk}^{-r} (x_{1i}^{+p}x_{1i}^{-p} - x_{ij}^{p+}x_{ij}^{p-})
+  x_{ij}^{p-} ( x_{ij}^{p+} x_{jk}^{-r} - x_{ik}^{pr}z ).
\end{eqnarray*}

\begin{Corollary}
The toric ideal $I_{{\bf D}_{n}^{\pm}}$ is generated by binomials
{\rm (i)} -- {\rm (vii)} in ${\mathcal G}$ in Theorem {\rm \ref{gb}}.
In particular, $I_{{\bf D}_{n}^{\pm}}$ is generated by
quadratic binomials.
\end{Corollary}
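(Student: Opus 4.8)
The plan is to deduce the Corollary from Theorem \ref{gb} by showing that the binomial (viii) is redundant: it lies in the ideal generated by (i)--(vii). Since a Gröbner basis is in particular a generating set, once (viii) is shown to be a consequence of the others, the remaining binomials (i)--(vii) still generate $I_{{\bf D}_{n}^{\pm}}$. The quadratic claim is then immediate, since each of (i)--(vii) is visibly a binomial of degree two.

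The key computation is already essentially displayed in the excerpt: for the binomial (viii), $x_{1i}^{+p}x_{1i}^{-p}x_{jk}^{qr} - x_{ij}^{pq}x_{ik}^{pr}z$, one splits into the two cases $q = +$ and $q = -$ and writes, respectively,
\[
x_{1i}^{+p}x_{1i}^{-p}x_{jk}^{+r} - x_{ij}^{p+}x_{ik}^{pr}z
=
x_{jk}^{+r}\bigl(x_{1i}^{+p}x_{1i}^{-p} - x_{ij}^{p+}x_{ij}^{p-}\bigr)
+ x_{ij}^{p+}\bigl(x_{ij}^{p-}x_{jk}^{+r} - x_{ik}^{pr}z\bigr),
\]
and the analogous identity with all $+$ in the $jk$-slot replaced by $-$. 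First I would observe that $x_{1i}^{+p}x_{1i}^{-p} - x_{ij}^{p+}x_{ij}^{p-}$ is exactly a binomial of type (vi) (valid because $1 < i \neq j \leq n$), and that $x_{ij}^{p-}x_{jk}^{+r} - x_{ik}^{pr}z$ (resp.\ $x_{ij}^{p+}x_{jk}^{-r} - x_{ik}^{pr}z$) is, after reindexing using the convention $x_{ij}^{pq} = x_{ji}^{qp}$, a binomial of type (iii), since it has the shape $x_{ab}^{+c}x_{ad}^{-e} - x_{bd}^{ce}z$ with $|\{a,b,d\}| = 3$. Hence the right-hand side of each identity is a polynomial combination of binomials in (iii) and (vi), so (viii) lies in the ideal $\langle(\mathrm{i})\text{--}(\mathrm{vii})\rangle$.

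The only genuine thing to check carefully is the bookkeeping of the $\pm$ superscripts: one must confirm that in the factored expressions the intermediate variables (such as $x_{ij}^{p-}$, $x_{ij}^{p+}$) have superscripts consistent with the fixed value of $p$ and the sign appearing in $jk$, and that after applying the identification $x_{ij}^{pq} = x_{ji}^{qp}$ the resulting binomials genuinely match the templates (iii) and (vi) for some admissible index triple. This is the main (and only) obstacle, and it is routine — a matter of matching patterns in the eight-fold case analysis on $(p,q,r) \in \{+,-\}^3$, which the displayed identities already handle uniformly in $r$ and split only on $q$. Once this is done, I would conclude: by Theorem \ref{gb} the set $\{(\mathrm{i}),\dots,(\mathrm{viii})\}$ generates $I_{{\bf D}_{n}^{\pm}}$; by the identities above each binomial in (viii) is in $\langle(\mathrm{i}),\dots,(\mathrm{vii})\rangle$; therefore $\{(\mathrm{i}),\dots,(\mathrm{vii})\}$ already generates $I_{{\bf D}_{n}^{\pm}}$. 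Since every binomial in (i)--(vii) has degree $2$, the ideal is generated by quadratic binomials.
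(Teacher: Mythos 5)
Your argument is correct and is exactly the paper's: the paper derives the Corollary from Theorem \ref{gb} by the same two displayed identities, expressing each binomial of type (viii) as a combination of a type-(vi) binomial and a type-(iii) binomial (the latter after the reindexing convention $x_{ij}^{pq}=x_{ji}^{qp}$), so that (i)--(vii) already generate $I_{{\bf D}_{n}^{\pm}}$. Your superscript bookkeeping checks out, so nothing further is needed.
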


\section{Numerical example}
In this section, we perform our Markov chain procedure to 
an imaginary data set. Our data set is constructed from 
actual experimental data as follows.
In \cite{Jun-et-al-2003}, the Box-Behnken design is used to apply the
response surface method. The purpose of this experiment is to determine the
optimal processing condition of a pulsed UV-light system to inactivate the
fungal spores of Aspergillus niger in corn meal. 
The three factors are A: Treatment time (20, 60, 100 second), B: Distance
from the UV strobe (3, 8, 13 cm), and C: Voltage input (2000, 2900, 3800 V). 
The response is the reduction of the Aspergillus niger in the 
$\log_{10}$ scale. In \cite{Jun-et-al-2003}, the first-order and the 
second-order polynomial models for the response are considered.
See \cite{Jun-et-al-2003} for detail description
of the data analysis. 
Because our method is for discrete data, 
we use the rounded values of ($10$ times of) the 
responses in this experimental data
and treat them as realizations of discrete variables.  
Then we have an imaginary data set in Table 
\ref{tbl:Aspergillus-data}.
\begin{table}[htbp]
\caption{Box-Behnken design matrix for the three factors and the response 
(The responses are rounded values of Table 1 of \cite{Jun-et-al-2003}.
Fitted values are calculated under the model (\ref{eqn:first-order-model}))}
\label{tbl:Aspergillus-data}
\begin{center}
\begin{tabular}{rrrrr}\hline
\multicolumn{1}{c}{Time (s)} & 
\multicolumn{1}{c}{Distance (cm)} & 
\multicolumn{1}{c}{Voltage (V)} & 
\multicolumn{1}{c}{Response} & 
\multicolumn{1}{c}{Fitted values}\\ \hline
$20$ & $3$ & $2900$ & $4$    & $4.04$\\
$20$ & $13$ & $2900$ & $3$   & $3.59$\\
$100$ & $3$ & $2900$ & $33$  & $31.58$\\
$100$ & $13$ & $2900$ & $30$ & $28.02$\\
$20$ & $8$ & $2000$ & $2$    & $2.12$\\
$20$ & $8$ & $3800$ & $5$    & $6.84$\\
$100$ & $8$ & $2000$ & $14$  & $16.57$\\
$100$ & $8$ & $3800$ & $50$  & $53.42$\\
$60$ & $3$ & $2000$ & $7$    & $6.29$\\
$60$ & $3$ & $3800$ & $21$   & $20.29$\\
$60$ & $13$ & $2000$ & $5$   & $5.58$\\
$60$ & $13$ & $3800$ & $20$  & $18.01$\\
$60$ & $8$ & $2900$ & $13$   & $10.64$ \\ \hline
\end{tabular}
\end{center}
\end{table}
Because the responses in the original data in \cite{Jun-et-al-2003} are
continuous values, 
we cannot emphasize our computational results 
from the applied statistical view. The purpose of this numerical
experiment is only to check that our method works for some discrete data.
However, it can also be natural 
to consider the fitting of the log-linear model (\ref{eqn:log-linear-model})
to the response because the original response is reported in $\log_{10}$
scale. 

For the response data in Table \ref{tbl:Aspergillus-data}, we consider
the fitting of the first-order model (\ref{eqn:first-order-model})
based on the likelihood ratio statistics (\ref{eqn:likelihood-ratio}). 
The fitted values under the null model is calculated in the last column of 
Table \ref{tbl:Aspergillus-data}. The likelihood ratio is $2.36$ with 
$9$ degree of freedom. Therefore the asymptotic $p$ value is $0.98$ from
the asymptotic $\chi_9^2$ distribution. 
To evaluate the fitting of the first-order model (\ref{eqn:first-order-model}),
we perform the Markov chain Monte Carlo method. We use the Gr\"obner basis
given in Example \ref{ex:n=3-case} as a Markov basis. 
After $50000$ burn-in steps
from the observed data as the initial state, 
we derive $100000$ samples by Metropolis-Hasting algorithm. 
Among these samples, 
$98834$ samples have the larger likelihood ratio values than the observed
$2.36$. Therefore the conditional $p$ value is estimated as $0.99$, which
suggests the good fitting of the first-order model 
(\ref{eqn:first-order-model}).
Figure \ref{fig:Aspergillus-histgram} is the histogram of the sample
likelihood ratio statistics with the asymptotic $\chi_9^2$ distribution.
\begin{figure*}[htbp]
\begin{center}
\includegraphics[width=100mm]{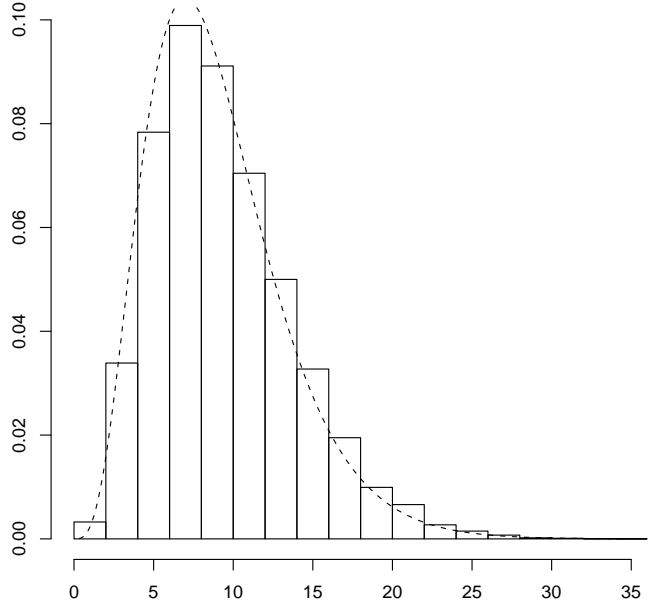}
\caption{Asymptotic and Monte Carlo estimated distribution of the likelihood
ratio statistics}
\label{fig:Aspergillus-histgram}
\end{center}
\end{figure*}

\section{Discussion}
In this paper, we present a new method for analyzing non-regular
fractional factorial designs. The motivation of this paper is a new finding
on the structure of the Gr\"obner bases of the centrally symmetric
configurations of root system $D_n$. 
As we have seen in the paper, 
we can relate the theoretical results in the algebraic
field to the statistical problems for the Box-Behnken designs. 
Our model is simple and fundamental. In fact, we usually consider more
complicated models such as second-order model for the analysis of the 
Box-Behnken designs. However, the structure of the Markov bases or 
the Gr\"obner bases for the second-order model 
is very complicated. Though the Markov chain Monte Carlo methods can be
considered for general non-regular designs, the structure of the Markov bases
is only revealed for simple models such as the hierarchical models for 
the regular designs at present. Therefore we think our contribution 
on the new results of the non-regular designs is important. 
Besides, compared to the continuous data analysis on the assumption of the 
normality, there are very few experiments are reported treating the 
discrete data arising in the fractional factorial designs. 
We think our Markov chain Monte Carlo procedure is very simple
and can be used easily, and can be one of the powerful choices
in the analysis of the discrete data.

\bibliographystyle{plain}

\begin{thebibliography}{99}
\bibitem{4ti2}
4ti2 team. 4ti2 -- A software package for algebraic, geometric and 
combinatorial problems on linear spaces. Available at {\tt www.4ti2.de}.
\bibitem{Aoki-Hara-Takemura-2012}
S. Aoki, H. Hara and A. Takemura (2012).
{\it Markov bases in algebraic statistics}. 
Springer Series in Statistics.
%
\bibitem{Aoki-Hibi-Ohsugi-2013}
S. Aoki, T. Hibi and H. Ohsugi (2013).
Markov chain Monte Carlo methods for the regular two-level fractional
factorial designs and cut ideals.
{\it J. Statist. Plann. Infer.}, {\bf 143}, 1791--1806.
%
\bibitem{Aoki-Hibi-Ohsugi-Takemura-2008}
S. Aoki, T. Hibi, H. Ohsugi and A. Takemura (2008).
Gr\"obner bases of nested configurations. 
{\it J. Algebra}, {\bf 320}, 2583--2593.
%
\bibitem{Aoki-Hibi-Ohsugi-Takemura-2010}
S. Aoki, T. Hibi, H. Ohsugi and A. Takemura (2010).
Markov basis and Gr\"obner basis of Segre-Veronese configuration for
testing independence in group-wise selections.
{\it Ann. Inst. Statist. Math.}, {\bf 62}(2), 299-321.
%
\bibitem{Aoki-Takemura-2009}
S. Aoki and A. Takemura (2009).
Markov basis for design of experiments with three-level factors. in {\it
	Algebraic and Geometric Methods in Statistics} (dedicated to
	Professor Giovanni Pistone on the occasion of his sixty-fifth
	birthday), edited by P. Gibilisco, E. Riccomagno,
	M. P. Rogantin and H. P. Wynn, Cambridge University Press,
	225--238.
\bibitem{Aoki-Takemura-2010}
S. Aoki and A. Takemura (2010).
Markov chain Monte Carlo tests for designed experiments. {\it Journal of
	Statistical Planning and Inference}, {\bf 140}, 817--830.
%
\bibitem{rootpoly}
F. Ardila, M. Beck, S. Ho\c{s}ten, J. Pfeifle, and K. Seashore (2011).
Root Polytopes and Growth Series of Root Lattices,
{\it SIAM J. Discrete Math.}, {\bf 25} (1), 360--378.
%
\bibitem{Box-Behnken-1960}
G. E. P. Box and D. W. Behnken (1960).
Some new three level designs for the study of quantitative variables.
{\it Technometrics}, {\bf 2}, 455--475.
%
\bibitem{Diaconis-Sturmfels-1998}
P. Diaconis and B. Sturmfels (1998).
Algebraic algorithms for sampling from conditional distributions. {\it
	Annals of Statistics}, {\bf 26}, 363--397.
%
\bibitem{Hastings-1970}
W. K. Hastings (1970).
Monte Carlo sampling methods using Markov chains and their
	applications. {\it Biometrika}, {\bf 57}, 97--109.

\bibitem{HMOS-2014}
T. Hibi, K. Matsuda, H. Ohsugi, and K. Shibata (2014).
Centrally symmetric configurations of order polytopes,
preprint.
({\tt  arXiv:1409.4386 [math.AC]}.)


\bibitem{Jun-et-al-2003}
S. Jun, J. Irudayaraj, A. Demirci and D. Geiser (2003).
Pulsed UV-light treatment of corn meal for inactivation of Aspergillus
niger spores. 
{\it International Journal of Food Science and Technology},
{\bf 38}, 883--888.
%
\bibitem{Ohsugi-Hibi-2002}
H. Ohsugi and T. Hibi (2002).
Quadratic initial ideals of root systems,
{\it Proc. Amer. Math. Soc.}, {\bf 130}, 1913--1922. 
%
\bibitem{Ohsugi-Hibi-2013}
H. Ohsugi and T. Hibi (2014).
Centrally symmetric configurations of integer matrices.
{\it Nagoya Math. J.}, in press
(doi:10.1215/00277630-2857555).
%
\bibitem{Wu-Hamada}
C. F. J. Wu and M. Hamada (2000).
{\it Experiments. Planning, Analysis, and Parameter Design Optimization}. 
John Wiley \& Sons. 
%
\end{thebibliography}

\end{document}